\documentclass[11pt, reqno]{amsart}

\usepackage[margin=1in]{geometry}
\usepackage{amsmath,amssymb,amsthm,mathtools}
\usepackage{microtype}
\usepackage{xcolor}
\usepackage[colorlinks=true,linkcolor=blue!45!black,
citecolor=blue!45!black,urlcolor=blue!45!black]{hyperref}
\newcommand{\E}{\mathbb E}
\newcommand{\Ber}{\operatorname{Ber}}
\newcommand{\TV}{\mathrm{TV}}
\newcommand{\dd}{\mathrm d}

\newtheorem{theorem}{Theorem}[section]
\newtheorem{lemma}[theorem]{Lemma}
\newtheorem{corollary}[theorem]{Corollary}
\theoremstyle{remark}
\newtheorem{remark}[theorem]{Remark}
\numberwithin{equation}{section}

\title{Total Variation Distance between Product Distributions: an Analytic Proxy}
\author{Ariel Avital}
\author{Aryeh Kontorovich}
    \address{A.A and A.K.: Ben-Gurion University of the Negev}
    \email[Ariel Avital]{avitalq@post.bgu.ac.il}
    \email[Aryeh Kontorovich]{karyeh@cs.bgu.ac.il}
\author{Roman Vershynin}
    \address{R.V. and G.Z: University of California, Irvine}
    \email[Roman Vershynin]{rvershyn@uci.edu}
    \email[Guangyi Zou]{zouguangyi2001@gmail.com}
    \thanks{
    A.K. was supported by
    the Israel Science Foundation
(ISF 581/25)
and
the Binational Science Foundation
(BSF 2024243).
R.V. was supported by NSF Grant DMS 2451011 and U.S. Air Force Grant FA9550-25-1-0294.}
\author{Guangyi Zou}
\date{}

\begin{document}

\begin{abstract}
We characterize, up to universal constants, the total variation distance between finite products of arbitrary probability measures by a simple formula. A theorem of Lata\l a reduces this expression to one scalar equation.
As an application, we characterize the sample complexity of equal-prior binary hypothesis testing uniformly in the weak-detection regime, where squared Hellinger distance alone does not determine the answer,
and also
characterize the TV distance between multinomial distributions.
\end{abstract}
\maketitle

\section{Main results}

Total variation measures the accuracy of distributional approximations~\cite{LeCam1960} and determines the optimal error in equal-prior binary hypothesis testing~\cite{LeCamYang2000}. It does not tensorize exactly~\cite{KontorovichTensorization2025}; analytic bounds for Bernoulli products and sums appear in~\cite{KontorovichTVHomogenization2026,KontorovichHomogenizationPrinciple2026,AvitalKontorovichSalafatinos2026}.

Exact evaluation is $\#\mathsf{P}$-hard even for Bernoulli products~\cite{BhattacharyyaEtAl2023}, but randomized and deterministic polynomial-time relative-approximation schemes are available for finite-alphabet products~\cite{FengGuoJerrumWang2023,FengLiuLiu2024}. For fixed accuracy and confidence, the randomized algorithm of Anand, Benford, and Guo~\cite{AnandBenfordGuo2026} runs in time linear in the marginal-table size in the unit-cost arithmetic model.

Our goal is analytic: we give a dimension-free constant-factor formula for products of arbitrary probability measures. It reduces to one scalar equation.

Recall that, for probability measures \(P,Q\) on a measurable space \((\mathsf X,\mathcal F)\), the total variation is defined as
\[
{\TV}(P,Q):=\sup_{B\in\mathcal F}|P(B)-Q(B)|.
\]
Fix \(n\ge1\). For probability measures \(P_i,Q_i\) on measurable spaces \((\mathsf X_i,\mathcal F_i)\), our goal is to compute the TV distance between the  product measures
\[
\mathbf P:=\bigotimes_{i=1}^nP_i,\qquad
\mathbf Q:=\bigotimes_{i=1}^nQ_i.
\]
To do this, consider the midpoint measure defined by
\[
M_i:=\frac{P_i+Q_i}{2},\qquad
\mathbf M:=\bigotimes_{i=1}^nM_i.
\]
Define the \emph{midpoint score} in coordinate \(i\) as the
Radon--Nikodym derivative\footnote{If \(P_i,Q_i\) have densities
or probability mass functions \(p_i,q_i\), then
\(u_i(x)=(p_i(x)-q_i(x))/(p_i(x)+q_i(x))\), with \(0/0:=0\).
The Radon--Nikodym notation extends this definition to arbitrary
probability measures.}
\[
u_i:=\frac{\dd(P_i-Q_i)}{\dd(P_i+Q_i)}.
\]
Let \(X_1,\ldots,X_n\) be independent random variables with distribution \(X_i\sim M_i\), and set
\begin{equation}\label{eq: U}
U_i=u_i(X_i).
\end{equation}

\begin{theorem}
\label{thm:score}
We have
\begin{equation}
{\TV}(\mathbf P,\mathbf Q)
\asymp
\min\Bigl\{1,\E\Bigl(\sum_{i=1}^nU_i^2\Bigr)^{1/2}\Bigr\},
\label{eq:score-main}
\end{equation}
where the equivalence is up to a universal constant factor.\footnote{We write \(a\lesssim b\) if \(a\le Cb\) for a universal constant \(C\), write \(a\gtrsim b\) if \(b\lesssim a\), and write \(a\asymp b\) if both bounds hold.}
\end{theorem}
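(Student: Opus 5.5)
The plan is to recast ${\TV}(\mathbf P,\mathbf Q)$ as the first absolute moment of a bounded Bayesian martingale and then apply Davis' inequality. Let $S$ be a uniform random sign. Given $S$, draw $X_1,\dots,X_n$ independently with $X_i\sim P_i$ if $S=1$ and $X_i\sim Q_i$ if $S=-1$, and call $\mathbf W=(\mathbf P+\mathbf Q)/2$ the law of $X=(X_1,\dots,X_n)$. Since $\dd P_i/\dd M_i=1+u_i$ and $\dd Q_i/\dd M_i=1-u_i$, with $|u_i|\le1$ and $\E U_i=0$, the posterior mean is
\[
Z_k:=\E[S\mid X_1,\dots,X_k]=\frac{\prod_{i\le k}(1+U_i)-\prod_{i\le k}(1-U_i)}{\prod_{i\le k}(1+U_i)+\prod_{i\le k}(1-U_i)},
\qquad {\TV}(\mathbf P,\mathbf Q)=\E|Z_n|.
\]
Here $U_i$ is now read as $u_i(X_i)$ under $\mathbf W$. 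The process $(Z_k)_{k\le n}$ is a $[-1,1]$-valued martingale with $Z_0=0$. A short computation gives its increments:
\[
d_k=Z_k-Z_{k-1}=\frac{U_k\,(1-Z_{k-1}^2)}{1+Z_{k-1}U_k}.
\]
When $|Z_{k-1}|\le 1/2$ this gives $|d_k|\asymp |U_k|$, because the denominator lies in $[1/2,3/2]$. A single coordinate is a useful sanity check: both sides of \eqref{eq:score-main} equal $\E|U_1|$.

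\emph{Step 1 (stopping).} Let $\tau$ be the first $k$ with $|Z_k|\ge1/2$, with $\tau=\infty$ if there is none. By conditional Jensen, $\E[|Z_n|\mid\mathcal F_{\tau\wedge n}]\ge|Z_{\tau\wedge n}|$. This gives
\[
{\TV}\ge \E|Z_{\tau\wedge n}|\ge \tfrac12\,\mathbb P(\tau\le n).
\]
Conversely, ${\TV}\le \E|Z_{\tau\wedge n}|+\mathbb P(\tau\le n)$, since $|Z_n|\le1$. Hence ${\TV}\asymp \E|Z_{\tau\wedge n}|+\mathbb P(\tau\le n)$.

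\emph{Step 2 (Davis).} Apply Davis' inequality to the stopped martingale: $\E\sup_{k\le\tau\wedge n}|Z_k|\asymp\E(\sum_{k\le\tau\wedge n}d_k^2)^{1/2}$. Up to time $\tau$ we have $|d_k|\asymp|U_k|$. The final increment may overshoot, but it is still comparable to $|U_\tau|$, because $1-Z_{\tau-1}^2\ge3/4$ and $|U_\tau|\le1$. The maximal function satisfies $\E\sup_{k\le\tau\wedge n}|Z_k|\le \E|Z_{\tau\wedge n}|+\tfrac12\mathbb P(\tau\le n)$, since before $\tau$ we have $|Z_k|<1/2$. Combining this with Step 1 yields
\[
{\TV}\asymp \E\min\Bigl\{1,\Bigl(\sum_{k\le \tau\wedge n}U_k^2\Bigr)^{1/2}\Bigr\}+\mathbb P(\tau\le n)\quad\text{under }\mathbf W .
\]
It then remains to remove $\tau$. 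On $\{\tau\le n\}$ the $\min$ is at most $1$, which the term $\mathbb P(\tau\le n)$ already dominates. On the event that $\sum_{k\le\tau}U_k^2$ is large, Davis applied on that event forces $\mathbb P(\tau\le n)$ to be of constant order. Together these should give ${\TV}\asymp \E_{\mathbf W}\min\{1,(\sum_{i\le n}U_i^2)^{1/2}\}$. Finally, $\min\{1,\cdot\}$ can be moved outside the expectation using Jensen, $\E\min\{1,Y\}\le\min\{1,\E Y\}$. The reverse bound $\min\{1,\E Y\}\lesssim \E\min\{1,Y\}$ also holds here. When $\E Y$ is large, Paley--Zygmund applied to the sum of independent $[0,1]$-valued terms $\sum_i U_i^2$ shows that $Y$ is of order at least one with constant probability. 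Otherwise the truncation costs only a constant.

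\emph{Step 3 (from $\mathbf W$ to $\mathbf M$), the main obstacle.} The statement uses independent $X_i\sim M_i$, whereas the martingale lives under $\mathbf W$. We have
\[
\frac{\dd\mathbf W}{\dd\mathbf M}\Big|_{\mathcal F_k}=\tfrac12\Bigl(\prod_{i\le k}(1+U_i)+\prod_{i\le k}(1-U_i)\Bigr),
\]
and this density is not bounded in $n$. What should save us is the stopping. Up to time $\tau$ the two products are within a factor $3$ of each other, because $(1+Z)/(1-Z)\in[1/3,3]$. So the density equals $\sqrt{\prod_{i\le k}(1-U_i^2)}$ up to constants. As a product of factors $\le1$, this quantity is non-increasing in $k$ and bounded below by a constant as long as $\sum_{i\le k}U_i^2\lesssim1$, since $|U_i|\le1$. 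I will therefore introduce a second stopping time $\sigma$, the first $k$ with $\sum_{i\le k}U_i^2\ge1$. On $\mathcal F_{\tau\wedge\sigma\wedge n}$ the laws $\mathbf W$ and $\mathbf M$ are then mutually comparable within universal constants. The truncated quantity $\min\{1,(\sum_{k\le\tau\wedge\sigma\wedge n}U_k^2)^{1/2}\}$ is determined up to that time, so its expectations under $\mathbf W$ and under $\mathbf M$ agree up to constants. Under $\mathbf M$, this stopped quantity is comparable to $\E_{\mathbf M}\min\{1,(\sum_iU_i^2)^{1/2}\}$: stopping at $\sigma$ only caps the value at about $1$. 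The real danger is stopping early at $\tau$, because that event does not depend only on the squares. I expect to control it by repeating the argument of Steps 1 and 2 directly in the form ${\TV}\gtrsim\mathbb P(\tau\le n)$. The bookkeeping of these two stopping times, together with the overshoot at $\tau$, is where I expect the real work to be.
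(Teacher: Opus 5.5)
Your route through the posterior martingale under $\mathbf W$ differs from the paper's, but the lower bound ${\TV}\gtrsim\min\{1,A\}$, with $A:=\E(\sum_iU_i^2)^{1/2}$, rests on a false step. In Step~2 you assert $\E\sup_{k\le\tau\wedge n}|Z_k|\le\E|Z_{\tau\wedge n}|+\tfrac12\mathbb P(\tau\le n)$ ``since before $\tau$ we have $|Z_k|<1/2$''. That observation only handles $\{\tau\le n\}$, where the maximum is $|Z_\tau|$. On $\{\tau>n\}$ the path can climb to nearly $\tfrac12$ and drift back near $0$, so the maximum is not controlled by $|Z_n|$. What your reasoning actually gives is $\E|Z_{\tau\wedge n}|+\tfrac12\mathbb P(\tau>n)$, which is useless.

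General martingale theory cannot patch this. Davis controls the maximal function, not the terminal value. For a $[-1,1]$-valued martingale stopped on leaving $(-\tfrac12,\tfrac12)$, the ratio of $\E\sup_k|Z_k|$ to $\E|Z_{\tau\wedge n}|+\mathbb P(\tau\le n)$ can be of order $\log(1/\delta)$: run a Brownian motion for time $\delta$, then until it hits $0$ or $\pm\tfrac12$, and freeze it. In the present model the inequality you need is true, but only as a consequence of the theorem itself. Establishing it requires the independence of the $U_i$ under $\mathbf M$, which is precisely the content of the lower bound, and nothing in Steps~1--2 uses it.

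Step~3 has further unresolved points. The comparison of $\mathbb P_{\mathbf M}(\tau\le n)$ with $\mathbb P_{\mathbf W}(\tau\le n)$ is left open, as you acknowledge. Also, $\prod_{i\le k}(1-U_i^2)$ is not bounded below merely because $\sum_{i\le k}U_i^2<1$: a single $U_i^2$ near $1$ kills it. So $\sigma$ needs a threshold such as $\tfrac12$.

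What does survive is the upper bound, and cleanly. Set $\Pi^\pm_k:=\prod_{i\le k}(1\pm U_i)$; the density of $\mathbf W$ relative to $\mathbf M$ on $\mathcal F_k$ is $D_k=(\Pi^+_k+\Pi^-_k)/2$. For $k<\tau$ the ratio $\Pi^+_k/\Pi^-_k$ lies in $(\tfrac13,3)$, so $D_k\le\frac{2}{\sqrt3}\sqrt{\Pi^+_k\Pi^-_k}\le\frac{2}{\sqrt3}$; also $D_\tau\le2D_{\tau-1}$. Hence ${\TV}\le3\E|Z_{\tau\wedge n}|\lesssim\E_{\mathbf W}(\sum_{k\le\tau\wedge n}U_k^2)^{1/2}\lesssim\E_{\mathbf M}(\sum_iU_i^2)^{1/2}$, with no second stopping time.

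For the lower bound the paper drops martingales and works under $\mathbf M$, where independence is available:
\begin{itemize}
\item ${\TV}=\E|\Phi(U)|$ with $\Phi=\sum_iU_i+R$.
\item Marcinkiewicz--Zygmund gives $\E|\sum_iU_i|\asymp A$.
\item Orthogonality of square-free monomials gives $\E R^2\le\sum_{k\ge3\text{ odd}}\rho^k/k!$ with $\rho:=\sum_i\E U_i^2$. Together with $A\ge1-e^{-\rho/2}$, the remainder is negligible when $\rho$ is small.
\item For $\rho$ above a constant, $1-{\TV}\le\prod_i\E\sqrt{1-U_i^2}\le e^{-\rho/2}$ gives ${\TV}\gtrsim1$.
\end{itemize}
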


This result can be further simplified.
The expectation in \eqref{eq:score-main} can be characterized, up to universal constants, by a one-dimensional equation:
\begin{theorem}
\label{thm:proxy}
Assume that \(\mathbf P\ne \mathbf Q\). Let \(T>0\) be the unique solution of
\[
G(T)=\frac12,
\qquad\text{where}\qquad
G(t):=\sum_{i=1}^n\log\E\sqrt{1+\frac{U_i^2}{t^2}}.
\]
Otherwise, set \(T=0\). Then
\begin{equation}
{\TV}(\mathbf P,\mathbf Q)\asymp\min\{1,T\}.
\label{eq:proxy-TV}
\end{equation}
\end{theorem}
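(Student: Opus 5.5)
The plan is to deduce Theorem~\ref{thm:proxy} from Theorem~\ref{thm:score}. By Theorem~\ref{thm:score} it suffices to show
\[
\E\Bigl(\sum_{i=1}^n U_i^2\Bigr)^{1/2}\asymp T .
\]
Then \(\min\{1,\E(\sum U_i^2)^{1/2}\}\asymp\min\{1,T\}\), because \(x\mapsto\min\{1,x\}\) respects two-sided constant-factor equivalence.

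Well-posedness comes first. Each \(|U_i|\le 1\), so \(G\) is finite, continuous and strictly decreasing on \((0,\infty)\) as soon as some \(U_i\not\equiv0\), and \(G(t)\to0\) as \(t\to\infty\). If \(\mathbf P\ne\mathbf Q\), then some \(P_i\ne Q_i\), so \(\Pr(U_i\ne0)>0\) and \(\E\sqrt{1+U_i^2/t^2}\to\infty\) as \(t\to0\). Hence \(G(T)=\tfrac12\) has a unique root. If \(\mathbf P=\mathbf Q\), both sides vanish.

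Set \(Y_i:=U_i^2\ge0\) and \(S:=\sum_i Y_i\). The target is \(\|S\|_{1/2}\asymp T^2\), where \(\|S\|_{1/2}=(\E S^{1/2})^2\). This is exactly Lata\l a's moment formula for sums of independent nonnegative variables,
\[
\|S\|_p\asymp\inf\Bigl\{s>0:\ \sum_i\log\E\Bigl(1+\frac{Y_i}{s}\Bigr)^{p}\le p\Bigr\},
\]
taken with \(p=\tfrac12\) and \(s=t^2\); indeed \(\sum_i\log\E(1+Y_i/t^2)^{1/2}=G(t)\). So the intended proof is a direct invocation of Lata\l a's theorem, which the abstract announces. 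The one point to check is that the cited version covers \(p=\tfrac12\) with universal constants. If only \(p\ge1\) is available, I would prove the \(p=\tfrac12\) case by hand as follows.

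\emph{Upper bound.} We have \(1+\sum_i a_i\le\prod_i(1+a_i)\) for \(a_i\ge0\). Taking square roots, using independence, and evaluating at \(t=T\),
\[
\E\sqrt{S/T^2}\le\E\sqrt{1+S/T^2}\le\prod_i\E\sqrt{1+Y_i/T^2}=e^{G(T)}=e^{1/2}.
\]
Hence \(\E S^{1/2}\le e^{1/2}T\).

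\emph{Lower bound (the main obstacle).} I would show \(\E S^{1/2}\ge cT\) by a truncation argument in the spirit of Lata\l a. Put \(\phi(x)=\sqrt{1+x}-1\in[\min(x,\sqrt x)/3,\ \min(x,\sqrt x)]\), so \(G(T)=\tfrac12\) gives
\[
\sum_i\E\phi(Y_i/T^2)\ge\tfrac12 .
\]
Split \(Y_i/T^2\) into its part on \(\{Y_i\le T^2\}\) and its part on \(\{Y_i>T^2\}\). One of the two contributions to \(\sum_i\E\phi\) is at least \(\tfrac14\).

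If the \emph{small part} dominates, then \(\sum_i\E\bigl[(Y_i/T^2)\mathbf 1_{Y_i\le T^2}\bigr]\gtrsim1\). These are independent summands bounded by \(1\), so a Paley--Zygmund or second-moment argument shows their sum is \(\gtrsim1\) with constant probability. This gives \(\E\sqrt{S}/T\gtrsim1\).

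If the \emph{large part} dominates, then \(\sum_i\E\bigl[\sqrt{Y_i}/T\,\mathbf 1_{Y_i>T^2}\bigr]\gtrsim1\). I would use \(\sqrt S\ge\max_i\sqrt{Y_i}\) together with the standard estimate for the maximum of independent nonnegative variables. Writing \(Z_i:=\sqrt{Y_i}\mathbf 1_{Y_i>T^2}\), that estimate reads
\[
\E\max_i Z_i\gtrsim\min\Bigl\{\sum_i\E Z_i,\ \text{the level }\lambda\text{ where }\textstyle\sum_i\Pr(Z_i>\lambda)\approx1\Bigr\}.
\]
Both branches are at least of order \(T\): the first by the case assumption, and the second because each nonzero \(Z_i\) exceeds \(T\).

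The delicate step is making these two cases quantitatively uniform, in particular when many tiny probabilities \(\Pr(Y_i>T^2)\) add up. I expect to have to handle this via the decoupling \(\Pr(\max_i Z_i>\lambda)\ge1-e^{-\sum_i\Pr(Z_i>\lambda)}\).
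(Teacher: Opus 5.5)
Your proposal follows the paper's proof: you reduce to Theorem~\ref{thm:score}, use the same intermediate-value argument for existence and uniqueness of $T$, and apply Lata\l a's theorem to the independent nonnegative variables $U_i^2$ with $p=1/2$, where the substitution $s=t^2$ turns Lata\l a's functional into exactly $G$. The paper invokes Lata\l a's Theorem~1 directly at $p=1/2$, so your by-hand fallback is not needed; if you keep it, its upper bound is correct as written, and the lower-bound dichotomy can be completed along the lines you indicate (Paley--Zygmund for the truncated part, and the split according to whether $\sum_i\Pr(U_i^2>T^2)$ is at least or below $1/2$ for the large part).
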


\section{The proofs}

\begin{proof}[Proof of Theorem~\ref{thm:score}]
By definition, we have \(M_i\)-almost surely:
\[
\frac{\dd P_i}{\dd M_i}=1+u_i,\qquad
\frac{\dd Q_i}{\dd M_i}=1-u_i.
\]
This allows us to express the TV distance between the marginals in terms of $U_i$:
\begin{equation} \label{eq: TV as U}
\TV(P_i,Q_i)=\frac{1}{2}\int \left|\frac{\dd P_i}{\dd M_i}-\frac{\dd Q_i}{\dd M_i}\right|\dd M_i=\int |u_i|\dd M_i=\E|U_i|.
\end{equation}
Similarly, we have
\begin{equation} \label{eq: Ui mean}
\E U_i=0, \qquad |U_i|\le 1 \quad \text{\((M_i\)-almost surely.)}
\end{equation}

Next, let's express the TV distance between the product distributions in terms of $U_i$.
For any \(x=(x_1,\ldots,x_n)\), we have, \(\mathbf M\)-almost surely:
\begin{equation}\label{equ:PQ-def}
\frac{\dd\mathbf P}{\dd\mathbf M}(x)
=\prod_{i=1}^n\bigl(1+u_i(x_i)\bigr),\qquad
\frac{\dd\mathbf Q}{\dd\mathbf M}(x)
=\prod_{i=1}^n\bigl(1-u_i(x_i)\bigr).
\end{equation}
Expanding the products, we see that the difference of these two product ratios is
their odd part:
\begin{align}
{\TV}(\mathbf P,\mathbf Q)
&=\frac12\int\Bigl|
\frac{\dd\mathbf P}{\dd\mathbf M}
-\frac{\dd\mathbf Q}{\dd\mathbf M}\Bigr|\,\dd\mathbf M
=\E|\Phi(U)|,
\label{eq:tv-phi}
\end{align}
where
\begin{equation}
\Phi(U):=
\frac12\Bigl\{\prod_i(1+U_i)-\prod_i(1-U_i)\Bigr\}
=\sum_{\substack{I\subseteq[n]\\|I|\ \mathrm{odd}}}
\prod_{i\in I}U_i.
\label{eq:odd-expansion}
\end{equation}
Let us linearize \(\Phi\) by writing
\[
\Phi(U)=S+R,
\qquad
S:=\sum_{i=1}^nU_i.
\]
Marcinkiewicz--Zygmund inequality with exponent \(1\) (see, e.g., \cite[Chapter~3, Section~8]{Gut2013}) allows us to  evaluate the main, linear term:
\begin{equation}
\label{eq:S=A}
\E|S|\asymp A, \qquad\text{where }A:=\E\Bigl(\sum_{i=1}^nU_i^2\Bigr)^{1/2}.
\end{equation}

Next, we want to bound the remainder term \(R\). It contains the odd terms of degrees at least three. Distinct square-free monomials are orthogonal, since $\E U_i = 0$ by \eqref{eq: Ui mean}. Hence
\begin{equation}
\E R^2
=\sum_{\substack{k\ge3\\k\ \mathrm{odd}}}
\sum_{|I|=k}\prod_{i\in I}\E U_i^2
\le
\sum_{\substack{k\ge3\\k\ \mathrm{odd}}}\frac{\rho^k}{k!},
\qquad
\text{where }
\rho:=\sum_{i=1}^n\E U_i^2.
\label{eq:R-second}
\end{equation}
In particular, if \(0\le\rho\le1\), we have $\E R^2\lesssim \rho^{3}$, so we obtain a bound on the remainder:
\begin{equation}
\E|R|\lesssim \rho^{3/2},
\quad \text{if }0\le\rho\le1.
\label{eq:R-first}
\end{equation}
We want to say this this is negligible compared to the main term, whose expected magnitude is about $A$ as we saw in \eqref{eq:S=A}. So let's bound $A$ below:
\begin{align}
A
&\ge1-\E e^{-\sum_iU_i^2}
&\text{(by \(\sqrt z\ge1-e^{-z}\))}\notag\\
&=1-\prod_i\E e^{-U_i^2}
&\text{(by independence)}\notag\\
&\ge 1-\prod_i\E (1-\frac12 U_i^2)
&\text{(since \(0\le U_i^2\le1\) by \eqref{eq: Ui mean})}\notag
\\
&=1-\prod_i (1-\frac12\E U_i^2)
\ge1-\prod_i e^{-\frac12\E U_i^2}
&\text{(by \(1-x\le e^{-x}\))}\notag
\\
&=1-e^{-\rho/2}
\gtrsim \min\{1,\rho\}.&
\label{eq:A-rho}
\end{align}
Combining this with \eqref{eq:R-first}, we obtain
\[
\E|R|\lesssim\sqrt\rho\,\E|S|,
\qquad
\text{if }0\le\rho\le1.
\]

\emph{Small total variance.}
Suppose that \(\rho\le\rho_0\), where \(\rho_0\in(0,1)\) is a
sufficiently small universal constant. Then
\(\E|R|\le\frac12\E|S|\), and hence, by
\eqref{eq:tv-phi} and \eqref{eq:S=A},\[
{\TV}(\mathbf P,\mathbf Q)
=\E|\Phi(U)|=\E|S+R|
\asymp\E|S|
\asymp A.
\]
Moreover, Jensen's inequality implies that $A \le \sqrt{\rho}<1$. This proves \eqref{eq:score-main} in this regime.

\emph{Large total variance.} If \(\rho>\rho_0\), we no longer linearize \(\Phi\). Using \(\min\{a,b\}\le\sqrt{ab}\) and the product structure, we obtain
\begin{align*}
1-{\TV}(\mathbf P,\mathbf Q)
&=\int\min\Bigl\{
\frac{\dd\mathbf P}{\dd\mathbf M},
\frac{\dd\mathbf Q}{\dd\mathbf M}
\Bigr\}\,\dd\mathbf M \\
&\le
\int\sqrt{
\frac{\dd\mathbf P}{\dd\mathbf M}
\frac{\dd\mathbf Q}{\dd\mathbf M}
}\,\dd\mathbf M
=\prod_i\E\sqrt{1-U_i^2}\tag{by \eqref{equ:PQ-def} and definition of $U_i$}
\\
&\le
\prod_i\Bigl(1-\frac12\E U_i^2\Bigr)
\le \prod_i e^{-\frac12\E U_i^2}=e^{-\rho/2}.
\end{align*}
Thus
\begin{equation}
{\TV}(\mathbf P,\mathbf Q)\ge 1-e^{-\rho/2}.
\label{eq:tv-exponential-lower}
\end{equation}
This bound holds for every \(\rho\ge0\). In the present regime,
it gives \({\TV}(\mathbf P,\mathbf Q)\gtrsim1\), while
\eqref{eq:A-rho} gives \(A\gtrsim1\). Conversely,
\({\TV}(\mathbf P,\mathbf Q)\le1\). Therefore
\[
{\TV}(\mathbf P,\mathbf Q)\asymp\min\{1,A\}.
\]
This completes the proof of Theorem \ref{thm:score}.
\end{proof}
\bigskip
\begin{proof}[Proof of Theorem~\ref{thm:proxy}]
The proof is a straightforward application of Lata\l a's moment theorem~\cite{Latala1997}.

\(G\) is continuous and strictly decreasing. Moreover, \(G(t)\to0\) as \(t\to\infty\) and \(G(t)\to\infty\) as \(t\downarrow0\). (To check the latter fact, bound each summand of $G$ below by $\E|U_i|/t$.) Therefore, by the intermediate value theorem, \(T\) exists and is unique.

Applying Lata\l a's theorem~\cite[Theorem~1]{Latala1997} to the independent nonnegative random variables \(U_i^2\), with \(p=1/2\), gives
\[
\E\Bigl(\sum_{i=1}^nU_i^2\Bigr)^{1/2}\asymp T.
\]
Combined with Theorem~\ref{thm:score}, this gives \eqref{eq:proxy-TV}.
\end{proof}

\section{Examples and applications}
\label{sec:applications}

\subsection{Continuous and discrete distributions}

Theorem \ref{thm:proxy} applies for completely general distributions $P_i$ and $Q_i$. Let's now specialize this result for continuous and for discrete distributions on $\mathbb{R}^d$.

If $P_i$ and $Q_i$ have densities $p_i$ and $q_i$,  then, unwrapping the definition of $U_i$ in \eqref{eq: U}, we get
\[
G(t)=\sum_{i=1}^n\log\left[
\frac12\int
\sqrt{(p_i(x)+q_i(x))^2+
\frac{(p_i(x)-q_i(x))^2}{t^2}}\,\dd x
\right].
\]
Similarly, if $P_i$ and $Q_i$ have probability mass functions $p_i$ and $q_i$, then the same formula holds with the integral replaced by the sum over all $x$ in the joint support.

In either case, ${\TV}(\bigotimes_i P_i, \bigotimes_i Q_i)\asymp \min\{1,T\}$ where $T$ is the unique solution of $G(T)=\frac{1}{2}$.

\subsection{Tensorization bounds}

Suppose we wish to express the TV distance between the product measures $\bigotimes_i P_i$ and $\bigotimes_i Q_i$ in terms of the TV distances between their marginals $P_i$ and $Q_i$. In general, this is not possible: TV distance does not tensorise, even approximately. The best we can do is the following pair of bounds, due to A.~Kontorovich~\cite[Theorem~1.1]{KontorovichTensorization2025}, which we can now readily recover from  Theorem~\ref{thm:score}.

\begin{corollary}[Kontorovich] For any probability measures $P_i$, $Q_i$ on the same measurable space,
\begin{equation}
\min \Big\{1,\Big( \sum_{i=1}^n\TV(P_i,Q_i)^2 \Big)^{1/2}\Big\}
\lesssim
{\TV}\Big(\bigotimes_{i=1}^n P_i,\bigotimes_{i=1}^n Q_i\Big)
\lesssim
\min \Big\{1,\sum_{i=1}^n \TV(P_i,Q_i)\Big\}.
\label{eq:power-envelopes}
\end{equation}
\end{corollary}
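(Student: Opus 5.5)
We derive both bounds from Theorem~\ref{thm:score}, which says that $\TV(\mathbf P,\mathbf Q)\asymp\min\{1,A\}$ with $A=\E(\sum_i U_i^2)^{1/2}$. It is therefore enough to sandwich $A$ between $(\sum_i \TV(P_i,Q_i)^2)^{1/2}$ and $\sum_i \TV(P_i,Q_i)$. The common link is \eqref{eq: TV as U}, which gives $\TV(P_i,Q_i)=\E|U_i|$.

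\emph{Upper bound.} Use the elementary inequality $(\sum_i U_i^2)^{1/2}\le\sum_i|U_i|$ pointwise. Taking expectations gives
\[
A\le\sum_i\E|U_i|=\sum_i\TV(P_i,Q_i).
\]
Because $t\mapsto\min\{1,t\}$ is monotone, Theorem~\ref{thm:score} then yields the right-hand inequality in \eqref{eq:power-envelopes}. The trivial bound $\TV\le1$ would also do here.

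\emph{Lower bound.} Apply Minkowski's inequality in reverse, which is the triangle inequality for the Euclidean norm of expectations. For the random vector $V=(|U_1|,\dots,|U_n|)$, convexity of the Euclidean norm together with Jensen's inequality gives
\[
\bigl\|\E V\bigr\|_2\le\E\|V\|_2.
\]
Written out, this is
\[
\Bigl(\sum_i(\E|U_i|)^2\Bigr)^{1/2}\le\E\Bigl(\sum_iU_i^2\Bigr)^{1/2}=A.
\]
Hence $A\ge(\sum_i\TV(P_i,Q_i)^2)^{1/2}$. Combined with the lower half of Theorem~\ref{thm:score} and the monotonicity of $\min\{1,\cdot\}$, this gives the left-hand inequality of \eqref{eq:power-envelopes}.

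There is no real obstacle once Theorem~\ref{thm:score} is available. The only point needing care is that the vector Jensen step runs in the correct direction, from the norm of the expectation to the expectation of the norm. This is guaranteed because the Euclidean norm is convex.
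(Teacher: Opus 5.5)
Your proposal is correct and is essentially the paper's proof: apply Theorem~\ref{thm:score}, bound $\E(\sum_i U_i^2)^{1/2}$ above by $\sum_i\E|U_i|$ using $\|x\|_2\le\|x\|_1$, and bound it below by $(\sum_i(\E|U_i|)^2)^{1/2}$ using Jensen's inequality for the convex Euclidean norm, with $\TV(P_i,Q_i)=\E|U_i|$ linking both to the marginals. Your aside that the trivial bound $\TV\le 1$ ``would also do'' for the upper bound only covers the constant term of the minimum, not the $\sum_i\TV(P_i,Q_i)$ term, but the main argument does not depend on it.
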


\begin{proof}
We just apply Theorem~\ref{thm:score} and then use two elementary bounds on $\E\big(\sum_{i=1}^nU_i^2\big)^{1/2}$. For the upper bound, since $\|x\|_2\le \|x\|_1$ for any vector $x\in \mathbb{R}$, we have
\begin{equation}
\E\Bigl(\sum_{i=1}^nU_i^2\Bigr)^{1/2}\le \E\sum_{i=1}^n|U_i|=\sum_i \TV(P_i,Q_i),
\end{equation}
where we used \eqref{eq: TV as U} in the last step.
For the lower bound, use the convexity of $\ell^2$-norm and Jensen's inequality:
\[
\E\Bigl(\sum_{i=1}^nU_i^2\Bigr)^{1/2}\ge \Bigl(\sum_{i=1}^n(\E |U_i|)^2\Bigr)^{1/2}=\Bigl(\sum_{i}\TV(P_i,Q_i)^2\Bigr)^{1/2}. \qedhere
\]
\end{proof}

\begin{remark}[Optimality]
\label{rem:power-envelopes}
Both bounds in \eqref{eq:power-envelopes} can be attained up to
universal constants even in the i.i.d.\ case; see the
Bernoulli examples in
\cite[Section~2.4]{KontorovichTensorization2025}.
\end{remark}

\subsection{Identical distributions}
Theorem~\ref{thm:proxy} becomes especially transparent if all marginal measures are the same: $$
P_i = P, \quad Q_i = Q \quad \text{for all } i=1, \ldots, n.
$$
Then $U_i\overset{d} = U$,  and we get
\begin{equation}\label{eq: TV powers}
{\TV}(P^{\otimes n},Q^{\otimes n})
\asymp \min\{1,T\},
\end{equation}
where \(T>0\) is the unique solution of
\begin{equation}
\E\sqrt{1+\frac{U^2}{T^2}}=e^{1/(2n)}.
\label{eq:iid-T}
\end{equation}
To simplify this equation,
subtract $1$ from both sides and use that
$\sqrt{1+x^2}-1
\asymp \min\{x^2,x\}$ for $x \ge 0$.
This gives
\begin{equation}
\Psi(T)\asymp \frac1n,
\qquad\text{where}\qquad
\Psi(t):=\E\min\left\{\frac{U^2}{t^2},\frac{|U|}{t}\right\}.
\label{eq:iid-truncated-T}
\end{equation}
The elementary scaling bounds
\begin{equation}
a^{-2}\Psi(t)\le\Psi(at)\le a^{-1}\Psi(t),
\qquad a\ge1,
\label{eq:iid-psi-scaling}
\end{equation}
show that \(\Psi(t)\asymp 1/n\) if and only if \(t\asymp T\).
Thus any approximate solution of \eqref{eq:iid-truncated-T}
may replace \(T\) in \eqref{eq: TV powers}.

\subsection{Hypothesis testing}

Under equal priors, the optimal probability of correctly identifying
whether $n$ independent samples come from $P$ or $Q$ is
\[
\frac12 \Big( 1+\TV(P^{\otimes n},Q^{\otimes n}) \Big),
\]
see \cite{LeCamYang2000}. Thus, the sample complexity for success
probability at least $(1+\delta)/2$ is
\[
n_\delta(P,Q):=\min \bigl\{n\ge1:\TV(P^{\otimes n},Q^{\otimes n})\ge\delta \bigl\}.
\]
For fixed $\delta\in(0,1)$, the classical characterization (see \cite{Suresh2021} and Remark~\ref{rem: higher confidence} below) is in terms of the Hellinger distance:
\begin{equation}	\label{eq: high confidence sample}
	n_\delta(P,Q)\asymp_\delta H(P,Q)^{-2}
\end{equation}
where
\[
H(P,Q)^2=1-\E\sqrt{1-U^2}\asymp\E U^2.
\]
Our refinement is uniform as $\delta\downarrow0$: the
\emph{weak-detection regime}, in which Hellinger distance alone
does not characterize the sample complexity
\cite[Section~8]{PensiaJogLoh2025}.

\begin{corollary}[Uniform weak detection]
\label{cor:weak-detection}
For $P\ne Q$ and $0<\delta\le1/2$,
\begin{equation}
n_\delta(P,Q)
\asymp
\max\left\{
1,\,
\frac{\delta^2}{\E\min\{U^2,\delta|U|\}}
\right\}.
\label{eq:testing-sample-complexity}
\end{equation}
\end{corollary}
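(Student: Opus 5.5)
We will combine the i.i.d.\ specialization \eqref{eq: TV powers}--\eqref{eq:iid-truncated-T} with the scaling bounds \eqref{eq:iid-psi-scaling}. Write $T_n$ for the solution of \eqref{eq:iid-T} with $n$ samples, so that $\TV(P^{\otimes n},Q^{\otimes n})\asymp\min\{1,T_n\}$ and $\Psi(T_n)\asymp 1/n$. Note that
\[
\Psi(\delta)=\delta^{-2}\,\E\min\{U^2,\delta|U|\},
\]
so the claimed right-hand side of \eqref{eq:testing-sample-complexity} is exactly $\max\{1,1/\Psi(\delta)\}$. The task is therefore to show that $n_\delta\asymp\max\{1,1/\Psi(\delta)\}$.

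First we record the monotonicity facts. $\Psi$ is continuous and strictly decreasing wherever it is positive, and $P\ne Q$ gives $\Psi>0$. Hence $T_n$ is decreasing in $n$, because the left side of \eqref{eq:iid-T} is decreasing in $T$ while $e^{1/(2n)}$ decreases in $n$. Let $c,C$ be the universal constants with $c\min\{1,T_n\}\le \TV_n\le C\min\{1,T_n\}$, where $\TV_n:=\TV(P^{\otimes n},Q^{\otimes n})$.

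\emph{Upper bound on $n_\delta$.} Let $n_0:=\lceil K/\Psi(\delta)\rceil$ with a large universal constant $K$. From $\Psi(T_{n_0})\asymp 1/n_0\le \Psi(\delta)/K$ and the lower scaling bound $\Psi(at)\ge a^{-2}\Psi(t)$, we get $T_{n_0}\ge \delta/c$ once $K$ is large. Here we use that if $T_{n_0}<\delta/c$, then $\Psi(T_{n_0})\ge \Psi(\delta/c)\ge c^2\Psi(\delta)$, which contradicts the previous bound for large $K$. Therefore $\TV_{n_0}\ge c\min\{1,\delta/c\}\ge\delta$, using $\delta\le1/2$ and $c\le1$. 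This gives $n_\delta\le n_0\lesssim\max\{1,1/\Psi(\delta)\}$.

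\emph{Lower bound on $n_\delta$.} The bound $n_\delta\ge1$ is trivial, so assume $1/\Psi(\delta)$ is large. Take $n=\lfloor k/\Psi(\delta)\rfloor$ with a small universal $k$. Then $\Psi(T_n)\gtrsim 1/n\ge \Psi(\delta)/k$. The upper scaling bound $\Psi(at)\le a^{-1}\Psi(t)$ then forces $T_n\le \delta/(2C)$ for $k$ small, since otherwise $\Psi(T_n)\le 2C\,\Psi(\delta)$. Hence $\TV_n\le C T_n<\delta$. By monotonicity of $\TV_m$ in $m$ (marginalization contracts TV), $n_\delta>n\gtrsim 1/\Psi(\delta)$.

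The main obstacle is the bookkeeping of the constants. The relation $\Psi(T_n)\asymp 1/n$ holds only with universal constants; it comes from $\sqrt{1+x^2}-1\asymp\min\{x^2,x\}$ applied to $e^{1/(2n)}-1\asymp 1/n$, valid since $1/(2n)\le1/2$. These constants have to be converted, via \eqref{eq:iid-psi-scaling}, into a comparison of $T_n$ against $\delta$ up to a constant. For this we choose $K$ and $k$ after fixing $c$, $C$ and the implicit constants. We also need to check the small-$n$ edge case, where $1/\Psi(\delta)\lesssim1$ makes the floor vanish, and this is absorbed by the $\max\{1,\cdot\}$.
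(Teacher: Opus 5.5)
Your lower bound on $n_\delta$ is essentially right, but the upper bound has a genuine gap. From $\TV_{n_0}\ge c\min\{1,T_{n_0}\}$ and $T_{n_0}\ge\delta/c$ you only get $\TV_{n_0}\ge\min\{c,\delta\}$. This is $\ge\delta$ only when $\delta\le c$, and the facts $\delta\le1/2$, $c\le1$ do not give it.

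The constant $c$ is whatever universal constant comes out of Theorem~\ref{thm:proxy} (via Marcinkiewicz--Zygmund and Lata\l a), and nothing makes it $\ge1/2$. Enlarging $K$ does not help, because $c\min\{1,T_{n_0}\}$ saturates at $c$ once $T_{n_0}\ge1$. A two-sided equivalence $\TV_n\asymp\min\{1,T_n\}$ with unspecified constants cannot, by itself, show that $\TV_n$ ever exceeds $c$. So for $\delta\in(c,1/2]$ your argument never shows that $\TV_n$ reaches $\delta$.

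The missing ingredient is a lower bound on $\TV_n$ that tends to $1$. The paper uses your $\Psi$-inversion argument only for $\delta\le c$. For $c<\delta\le1/2$ it switches to the Hellinger bound \eqref{eq:tv-exponential-lower}, $\TV_n\ge1-e^{-n\E U^2/2}$, so that $n\ge2\log2/\E U^2$ already gives $\TV_n\ge1/2\ge\delta$. In this range $\Psi(\delta)\asymp\E U^2$, because $\delta U^2\le\min\{U^2,\delta|U|\}\le U^2$ (using $|U|\le1$) and $\delta\asymp1$. Together these yield $n_\delta\lesssim\max\{1,1/\Psi(\delta)\}$. An alternative fix is amplification via $1-\TV(P^{\otimes km},Q^{\otimes km})\le\bigl(1-\TV(P^{\otimes m},Q^{\otimes m})^2\bigr)^{k/2}$ with $k\asymp c^{-2}$, but some such input beyond the two-sided equivalence is needed.

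Your lower-bound argument uses only $\TV_n\le C\min\{1,T_n\}$, so it is valid for all $\delta\le1/2$, as in the paper. It has one slip. The estimate you need is $\Psi(\delta/(2C))\le4C^2\Psi(\delta)$, which comes from the lower scaling bound $\Psi(at)\ge a^{-2}\Psi(t)$. It is not $2C\Psi(\delta)$ from the upper one, which points the wrong way; this only changes the choice of $k$. Also, $T_n$ is increasing, not decreasing, in $n$, though you never use this.
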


\begin{proof}
Let \(\Psi\) be as in \eqref{eq:iid-truncated-T}, and define
\(T_n\) by \(\Psi(T_n)=1/n\). By \eqref{eq: TV powers},
\eqref{eq:iid-truncated-T}, and \eqref{eq:iid-psi-scaling},
\[
c\min\{1,T_n\}\le \TV(P^{\otimes n},Q^{\otimes n})\le C\min\{1,T_n\}
\]
for universal \(0<c\le1/2\) and \(C\ge1\).
For \(0<\delta\le c\), monotonicity of \(\Psi\) gives
\begin{equation}
\label{eq:testing-inversion-bounds}
\max\{1,\Psi(\delta/C)^{-1}\}
\lesssim n_\delta(P,Q)
\lesssim \max\{1,\Psi(\delta/c)^{-1}\}.
\end{equation}
By \eqref{eq:iid-psi-scaling},
both endpoints are comparable to \(\max\{1,\Psi(\delta)^{-1}\}\), proving \eqref{eq:testing-sample-complexity}.

The lower bound in \eqref{eq:testing-inversion-bounds} remains valid for \(c<\delta\le1/2\).
In this range, \(\Psi(\delta)\asymp\E U^2\), and
\eqref{eq:tv-exponential-lower} gives
\[
\TV(P^{\otimes n},Q^{\otimes n})\ge1-\exp(-n\E U^2/2).
\]
Thus any \(n\ge 2\log 2/\E U^2\) satisfies
\(\TV(P^{\otimes n},Q^{\otimes n})\ge1/2\ge\delta\).
Taking \(n\) of this order gives
\[
n_\delta(P,Q)
\lesssim \frac1{\E U^2},
\]
which implies the upper bound in \eqref{eq:testing-sample-complexity}.
\end{proof}

The denominator separates small scores, which contribute
quadratically, from large scores, which contribute linearly:
\[
\E\min\{U^2,\delta|U|\}
=
\E[U^2\mathbf1_{\{|U|\le\delta\}}]
+\delta\,\E[|U|\mathbf1_{\{|U|>\delta\}}].
\]
This distinction is lost when the denominator is replaced by
$\E U^2\asymp H^2(P,Q)$.

\begin{remark}[Higher confidence]	\label{rem: higher confidence}
For completeness, the remaining range has a classical
characterization. Put
$$
a:=-\log\E\sqrt{1-U^2}
= - \log \big( 1 - H^2(P,Q) \big).
$$
Hellinger tensorization and Cauchy--Schwarz give
\[
1-e^{-na}\le \TV(P^{\otimes n},Q^{\otimes n})\le\sqrt{1-e^{-2na}},
\]
and hence, uniformly for $1/2<\delta<1$,
\[
n_\delta(P,Q)
\asymp
\max\left\{1,\frac{\log(1/(1-\delta))}{a}\right\}.
\]
For mutually singular $P,Q$, take $a=\infty$; one sample suffices. In the other end of the spectrum, if $P,Q$ are close (for example, if $H^2(P,Q) \le 1/2$), we have $a \asymp H^2(P,Q)$, so we recover \eqref{eq: high confidence sample}.

Together with \eqref{eq:testing-sample-complexity}, this covers
every success probability strictly between $1/2$ (a random guess) and $1$ (full confidence).
\end{remark}

\subsection{An analytic solution}
To facilitate further applications, let us solve the approximate equation \eqref{eq:iid-truncated-T} analytically.
\begin{lemma}
We have
\begin{equation}
T\asymp
\inf_{A\in\mathcal F}
\left\{
n\E[|U|\mathbf1_A]
+
\sqrt{n\E[U^2\mathbf1_{A^c}]}
\right\}.
\label{eq:iid-split}
\end{equation}
\end{lemma}
\begin{proof}

By \eqref{eq:iid-truncated-T}, for every \(A\in\mathcal F\), we have
\[
\frac{1}{n}\asymp \E\min\left\{\frac{U^2}{T^2},\frac{|U|}{T}\right\} \lesssim
\frac 1T\E[|U|\mathbf1_A]+\frac 1 {T^2}\E[U^2\mathbf1_{A^c}].
\]
Thus
\[
T\lesssim \max\left\{ n\E[|U|\mathbf1_A], \sqrt{n\E[U^2\mathbf1_{A^c}]}\right\} \lesssim n\E[|U|\mathbf1_A] +\sqrt{n\E[U^2\mathbf1_{A^c}]}.
\]
Meanwhile, taking \(A=\{|U|>T\}\), by \eqref{eq:iid-truncated-T}, we obtain
\[
\frac{1}{n}\asymp \E\min\left\{\frac{U^2}{T^2},\frac{|U|}{T}\right\}= \frac 1T\E[|U|\mathbf1_A] + \frac 1{T^2}\E[U^2\mathbf1_{A^c}].
\]
Hence,
\[T\gtrsim \max\left\{ n\E[|U|\mathbf1_A], \sqrt{n\E[U^2\mathbf1_{A^c}]}\right\} \gtrsim n\E[|U|\mathbf1_A] +\sqrt{n\E[U^2\mathbf1_{A^c}]}.\]
Thus we finish the proof.
\end{proof}

\subsection{Multinomial laws}

For a probability measure \(P\) on \([k]:=\{1,\ldots,k\}\), let \(\operatorname{Mult}(n,P)\) denote
the law of the histogram of \(n\) independent samples from \(P\).

For two such measures \(P,Q\), every word with histogram \(z\) has probability \(\prod_jP(j)^{z_j}\) under \(P^{\otimes n}\) and \(\prod_jQ(j)^{z_j}\) under \(Q^{\otimes n}\).
Their difference has the same sign for all words with the same
histogram. Thus taking histograms preserves TV, and
Theorem~\ref{thm:proxy} gives
\begin{equation}
\TV\bigl(\operatorname{Mult}(n,P),\operatorname{Mult}(n,Q)\bigr)
=
\TV(P^{\otimes n},Q^{\otimes n})
\asymp\min\{1,T\}.
\label{eq:app-multinomial-sufficiency}
\end{equation}
By \eqref{eq:iid-split}, we have
\begin{equation}
T\asymp
\min_{A\subseteq[k]}
\left\{
n\sum_{j\in A}|P(j)-Q(j)|+
\sqrt{n\sum_{j\in A^c}\frac{(P(j)-Q(j))^2}{P(j)+Q(j)}}
\right\}.
\label{eq:app-multinomial-split}
\end{equation}

For Bernoulli case, let \(p\ne q\). In \eqref{eq:app-multinomial-split}, both
absolute differences equal \(|p-q|\), and the two denominators are \(p+q\) and \(2-p-q\). Together with
\eqref{eq:app-multinomial-sufficiency}, this yields\begin{equation}
\TV\bigl(\Ber(p)^{\otimes n},\Ber(q)^{\otimes n}\bigr)
\asymp
\min\left\{
1,\,
n|p-q|,\,
\frac{\sqrt n\,|p-q|}
{\sqrt{(p+q)(2-p-q)}}
\right\}.
\label{eq:Bernoulli-scale}
\end{equation}
The case \(p=q\) is immediate. This recovers the estimate of
Liu et al.~\cite[Theorem~5]{LiuEtAl2020}; see
\cite{AdellJodra2006} for exact formulas.

\subsection{Heterogeneous categorical sums}

Let \(P_i,Q_i\) be probability measures on \([k]\), with masses
\(p_{ij}:=P_i(\{j\})\) and \(q_{ij}:=Q_i(\{j\})\).
For independent \(X_i\sim P_i\) and \(Y_i\sim Q_i\), set
\[
N_P:=\sum_{i=1}^n e_{X_i},
\qquad
N_Q:=\sum_{i=1}^n e_{Y_i},
\]
where \(e_j\) is the \(j\)th standard basis vector of \(\mathbb R^k\).

\subsubsection*{Common tilt.}
Suppose that, for some \(h\in\mathbb R^k\),
\begin{equation}
p_{ij}=\frac{q_{ij}e^{h_j}}{Z_i},
\qquad
Z_i:=\sum_{\ell=1}^k q_{i\ell}e^{h_\ell}.
\label{eq:app-common-tilt}
\end{equation}
The two product laws have the same support. For a word \(x\)
in this support with histogram \(z\),
\[
\prod_{i=1}^n\frac{p_{i,x_i}}{q_{i,x_i}}
=
\frac{\exp\{\sum_jh_jz_j\}}{\prod_iZ_i}.
\]
Hence their probability differences have the same sign for
all words with the same histogram. Summing within each
histogram therefore preserves total variation.
Theorem~\ref{thm:proxy} gives
\begin{equation}
{\TV}(N_P,N_Q)
=
{\TV}\big(\bigotimes_i P_i,\bigotimes_i Q_i\big)
\asymp\min\{1,T\},
\label{eq:app-common-tilt1}
\end{equation}
\begin{remark}
\label{rem:app-tilt-characterization}
If all \(p_{ij},q_{ij}\) are positive, then
\eqref{eq:app-common-tilt} holds if and only if the product probability ratio depends only on the histogram. To see the converse, swapping
categories \(j,\ell\) between distinct trials \(i,r\) gives
\[
\frac{p_{ij}}{q_{ij}}\frac{p_{r\ell}}{q_{r\ell}}
=
\frac{p_{i\ell}}{q_{i\ell}}\frac{p_{rj}}{q_{rj}}.
\]
Thus the ratio matrix has rank one:
\(p_{ij}/q_{ij}=a_i b_j\) with \(a_i,b_j>0\).
Row normalization gives \(a_i=(\sum_j q_{ij}b_j)^{-1}\), yielding
\eqref{eq:app-common-tilt} with \(h_j=\log b_j\).
\end{remark}

\subsection*{Use of generative AI}
Generative-AI tools were used during exploratory development and editorial preparation of this draft. The authors take responsibility for all mathematical statements and references.

\subsection*{Overlap with an earlier preprint}
Some results overlap with \cite{KontorovichHomogenizationPrinciple2026}. The latter preprint is not currently under submission and will not be submitted for publication at any future time.

{\footnotesize
\newcommand{\etalchar}[1]{$^{#1}$}

}

\end{document}